\documentclass{article}
\usepackage{amssymb,amsthm,amsmath,amstext}
\usepackage{fullpage}
\usepackage{graphicx} 
\usepackage[dvipsnames]{xcolor}
\usepackage{url}
\usepackage{hyperref}
\usepackage[sortcites, doi=false, eprint=false]{biblatex}

\DeclareMathOperator{\Av}{Av}

\theoremstyle{plain}
\newtheorem{theorem}{Theorem}
\newtheorem{lemma}[theorem]{Lemma}
\newtheorem{prop}[theorem]{Proposition}

\theoremstyle{definition}
\newtheorem{defn}[theorem]{Definition}
\theoremstyle{plain}
\newtheorem{conjecture}[theorem]{Conjecture}

\title{Enumerating separable derangements}
\author{Robert Dougherty-Bliss\thanks{Dartmouth College, Department of Mathematics, Hanover, NH, USA} \and Alejandro B. Galv\'an\footnotemark[1] \and Michaela Polley\footnotemark[1] \and David Shuster\footnotemark[1]}

\date{\today}

\begin{document}
\maketitle

\begin{abstract}
    \noindent We give a polynomial-time algorithm to compute the number $b_n$ of
    separable derangements of $[n]$. This algorithm is based on a generating
    function technique which tracks permutations along with their occupied
    diagonals, where each permutation is counted once for every such diagonal.
    We provide bounds for the proportion of separable permutations which are
    derangements, show that $b_n$ and the large Schr\"oder numbers have the same
    exponential growth constant $(3 + 2 \sqrt{2})$, and use the first 3000 terms
    to conjecture more explicit asymptotic behavior. This partially answers
    several questions about separable derangements recently posed by Vatter.
\end{abstract}

\section{Introduction}

Enumerating permutations of $[n] = \{1, 2, 3, \dots, n\}$ which satisfy a given
set of restrictions is a fundamental problem in combinatorics. Vince Vatter
recently posed the question of finding the generating function for the number of
\emph{separable derangements} of $[n]$ \cite[Problem 4.4]{Vatter2026Problems}.
This work partially answers this question.

\begin{theorem}
    There exists a polynomial-time algorithm to compute $b_n$, the number of
    separable derangements of $[n]$.
\end{theorem}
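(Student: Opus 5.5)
We count separable derangements by inclusion--exclusion over fixed points, but organized so that the fixed-point information can be carried through the recursive decomposition of separable permutations. Every separable permutation of length $n \ge 2$ is uniquely a direct sum $\sigma = \alpha \oplus \beta$ with $\alpha$ sum-indecomposable, or a skew sum $\alpha \ominus \beta$ with $\alpha$ skew-indecomposable, where $\alpha,\beta$ are again separable. A fixed point of $\sigma$ is a point on the main diagonal $\sigma(i)=i$. Under a direct sum, fixed points of $\sigma$ are exactly the fixed points of $\alpha$ together with the shifted fixed points of $\beta$. Under a skew sum, however, the diagonal of $\sigma$ meets $\alpha$ and $\beta$ along \emph{off-main} diagonals $\{(i,j): j-i = d\}$. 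So the right quantity to track is, for each separable permutation, its set of points on every diagonal $j - i = d$, $-n < d < n$, as the abstract indicates.

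Concretely, I would define, for each $n$ and each offset $d$, refined counts such as
\[
F_{n,d,k} = \#\{\text{separable } \pi \in S_n \text{ with exactly } k \text{ points on the diagonal } \pi(i)-i=d\},
\]
split further by whether $\pi$ is sum-indecomposable, skew-indecomposable, or neither. To keep things polynomial, I would use inclusion--exclusion instead of exact counts. It suffices to compute $M_{n,d,k}$, the number of pairs ($\pi$, $k$-subset of its points on diagonal $d$). Then
\[
b_n=\sum_k (-1)^k M_{n,0,k}.
\]
Equivalently, one can carry a weight $(-1)^{\#\text{marked}}$ and compute the signed total $\sum_\pi (-1)^{\#\text{marked}}$ with marked points chosen among diagonal-$d$ points. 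This sum equals $\sum_\pi [\text{no points on diagonal } d]$, so we may simply compute $A_{n,d}$, the number of separable $\pi\in S_n$ avoiding diagonal $d$, split by indecomposability type.

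The recursion is then as follows. For $\sigma=\alpha\oplus\beta$ with $|\alpha|=m$, a point of $\beta$ at offset $e$ lands at offset $e$ in $\sigma$, and a point of $\alpha$ likewise keeps its offset. Hence $\sigma$ avoids diagonal $d$ iff both $\alpha$ and $\beta$ avoid $d$. For $\sigma=\alpha\ominus\beta$ with $|\alpha|=m$ and $|\beta|=n-m$:
\begin{itemize}
\item points of $\alpha$, at positions $1..m$ and values $n-m+1..n$, have offset shifted by $n-m$;
\item points of $\beta$ have offset shifted by $-m$.
\end{itemize}
So $\sigma$ avoids $d$ iff $\alpha$ avoids $d-(n-m)$ and $\beta$ avoids $d+m$. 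This gives
\[
A^{\oplus}_{n,d}=\sum_{m} I^{\oplus}_{m,d}\,A_{n-m,d},\qquad A^{\ominus}_{n,d}=\sum_m I^{\ominus}_{m,d-n+m}\,A_{n-m,d+m},
\]
where $I^{\oplus}$ and $I^{\ominus}$ denote the sum-indecomposable and skew-indecomposable parts, with $I^{\oplus}_{m,\cdot}=A^{\ominus}_{m,\cdot}$ for $m\ge2$ plus the singleton case. Offsets with $|d|\ge n$ are automatically avoided, and such entries are just the separable counts (large Schröder). The table has $O(n^2)$ entries, each computed in $O(n)$ time, for $O(n^3)$ arithmetic operations on integers of $O(n)$ bits. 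This is polynomial, and $b_n=A_{n,0}$.

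The main thing to verify carefully is the uniqueness of the decomposition, which gives no double counting, and the offset bookkeeping in the skew case: the diagonal condition on $\alpha$ and $\beta$ must be exactly a single-diagonal condition again, so the state space closes. The observation above that it does, and that only one diagonal per component is ever needed, is the crux. Once it holds, correctness follows by induction on $n$, and the running-time bound is immediate.
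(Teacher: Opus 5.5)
Your proposal is correct and is essentially the paper's argument. It uses the same unique decompositions $\sigma=\rho\oplus\tau$ (with $\rho=1$ or skew-decomposable) and $\sigma=\rho\ominus\tau$ (with $\rho=1$ or sum-decomposable), the same diagonal shifts (none under $\oplus$, and $n-m$ and $-m$ under $\ominus$, so each component is again constrained on a single diagonal), and the same $O(n^3)$ arithmetic operations on $O(n)$-bit integers. The only difference is that the paper's main recurrences count permutations with an \emph{occupied} diagonal $k$, taking $b_n=s_n-a_{n,0}$ with an inclusion--exclusion term in the $\oplus$ case, whereas you count permutations with an \emph{empty} diagonal directly, which is the simplification the paper itself notes as an optimization.
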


At the time of writing, Vatter and Jay Pantone computed $b_n$ through $n = 18$
using a method based on ``displacement sets'' \cite[sec.~4]{Vatter2026Problems}:
\begin{align*}
    0, 1, 2, 7, 30, 124, 560, 2610, 12470, 60955, 302930, 1528621, 7790780, \\
    40202007, 208787486, 1094575377, 5766892092, 30609900691, \dots
\end{align*}
We can compute through $n = 3000$ using an entirely different approach based on
generating functions. For example,
\begin{equation*}
    b_{100}
    =
    1109494928577851681540444606156833022733492291110692095509038076266474501.
\end{equation*}
These generating functions, which we call ``diagonal generating functions,''
track occupied diagonals in permutations. This leads to, as Wilf put it,
an ``effective'' answer which can stand in ``polite society'' \cite{Wilf1982}.

Independently, the restrictions ``separable'' and ``derangement'' are
well-understood. A permutation is a derangement if it has no fixed points. The
number of derangements of $[n]$ is
\begin{equation*}
    D_n = n! \sum_{k = 0}^n \frac{(-1)^k}{k!}.
\end{equation*}
This can be obtained with inclusion-exclusion or by solving the recurrence $D_n
= (n - 1) (D_{n - 1} + D_{n - 2})$. Separable permutations are the smallest
class of permutations which contain the identity and are closed under the
operations ``sum'' and ``skew sum'':
\begin{align*}
    &(\sigma, \tau) \in S_n \times S_m \\
    &(\sigma \oplus \tau)(i) =
    \begin{cases}
        \sigma(i) & i \leq n \\
        n + \tau(i - n) & n < i \leq n + m
    \end{cases} \\
    &(\sigma \ominus \tau)(i) =
    \begin{cases}
        \sigma(i) + m & i \leq n \\
        \tau(i - n) & n < i \leq n + m.
    \end{cases}
\end{align*}
The number $s_n$ of separable permutations is given by the \emph{large
Schr\"oder numbers} (A6318 in the OEIS \cite{OEISA6318}), which for $n \geq 1$
have the explicit formula
\begin{equation*}
    s_{n + 1} = \frac{1}{n} \sum_{k = 1}^n 2^k {n \choose k} {n \choose k - 1}.
\end{equation*}
The generating function for $s_n$ is algebraic:
\begin{equation*}
    \sum_{n = 1}^\infty s_n x^n = \frac{1 - x - \sqrt{1 - 6x + x^2}}{2}.
\end{equation*}
This generating function can be obtained by applying the method of Brignall,
Huczynska, and Vatter \cite{BrignallHuczynskaVatter2008}. The method constructs
generating functions which correspond to permutation families which
contain only finitely many ``simple'' permutations, subject to the restrictions
of finitely many ``query-complete'' sets.\footnote{The terms ``simple'' and
``query-complete'' are defined in \cite{BrignallHuczynskaVatter2008}.} It is
well-known that the separable permutations are precisely $\Av(2413,
3142)$,\footnotemark and that the only simple separable permutations are 1, 12,
and 21 \cite{West1995,BoseBussLubiw1998}, so the method works.

\footnotetext{Given a set of patterns $\mathcal{S}$, the set $\Av(\mathcal{S})$
is the set of all permutations which avoid the patterns in $\mathcal{S}$.}

When we combine the restrictions ``separable'' and ``derangement,'' the situation
becomes more delicate.\footnotemark Intuitively, the difficulty is that pattern
avoidance is about relative positions while fixed points are about absolute
positions. There is no obvious relation between these two that enables one to
say when a derangement would be separable or vice versa. The general method from
\cite{BrignallHuczynskaVatter2008} will not help because ``being a derangement''
is not a finite collection of query-complete sets.

\footnotetext{The first result about derangements that avoid patterns was by
    Robertson, Saracino, and Zeilberger, who enumerated derangements in
    $\Av(\beta)$ for $\beta \in \{132, 213, 321\}$
    \cite{RobertsonSaracinoZeilberger2003}. Elizalde later extended these
    results and gave some bijective proofs with Pak
    \cite{Elizalde2004,Elizalde2011,ElizaldePak2004}.}

From a more elementary perspective, the separable structure is not that useful
when checking if something is a derangement. While the direct sum is
simple---$\sigma \oplus \tau$ is a derangement if and only if $\sigma$ and
$\tau$ are---the skew sum is not: the main diagonal of $\sigma \ominus \tau$ may
intersect $\sigma$ or $\tau$ depending on their sizes. The diagonals of
separable permutations must be ``tracked'' if we are to determine whether a
given skew sum is a derangement.

In the following sections we will outline a procedure to calculate the number of
separable derangements of $[n]$. The basic idea is to introduce a generating
function based on the bivariate sequence
\begin{equation*}
    a_{n,k} = \text{number of separable $\sigma \in S_n$ with an occupied $k$th diagonal}.
\end{equation*}
This will allow us to count the separable derangements by complementing. In
addition, Section~\ref{sec:asymptotics} contains some results about the
asymptotics of $b_n$ compared to $s_n$.

\section{Background: Decomposition of Separable Permutations}
\label{sec:background}

\begin{defn}
    A permutation is \emph{sum decomposable} (resp.~\emph{skew-sum
    decomposable}) if it can be expressed as the sum (resp.~skew sum) of two nonempty
    permutations.
\end{defn}

\begin{defn}
    Define the following objects:
    \begin{itemize}
        \item $\mathcal{T}$ is the set of separable permutations, with ordinary generating function
        \begin{equation*}
            T(x) = \sum_{n = 1}^\infty s_n x^n.
        \end{equation*}
        \item $\mathcal{T}^\oplus$ is the set of sum decomposable separable permutations, with ordinary generating function
        \begin{equation*}
            T^\oplus(x) = \sum_{n = 1}^\infty s_n^+ x^n.
        \end{equation*}
        \item $\mathcal{T}^\ominus$ is the set of skew-sum decomposable separable permutations, with ordinary generating function
        \begin{equation*}
            T^\ominus(x) = \sum_{n = 1}^\infty s_n^- x^n.
        \end{equation*}
    \end{itemize}
\end{defn}

Every sum decomposable separable permutation $\sigma$ can be uniquely
decomposed as $\sigma = \rho\oplus\tau$, subject to the condition that $\tau \in
\mathcal{T}$ and either $\rho = 1$ or $\rho \in\mathcal{T}^\ominus$. Indeed,
this decomposition is obtained by taking the $\rho$ of minimum length such that
there exists $\tau$ with $\sigma = \rho\oplus\tau$. The minimality condition
ensures that $\rho\notin\mathcal{T}^\oplus$; and if we picked any larger $\rho'$
such that $\sigma = \rho'\oplus\tau'$, then necessarily $\rho'$ is sum
decomposable as $\rho' = \rho\oplus\alpha$ for some $\alpha$.

Similarly, every skew-sum decomposable, separable permutation $\sigma$ can be uniquely decomposed as $\sigma = \rho\ominus\tau$, where $\tau\in\mathcal{T}$ and either $\rho = 1$ or $\rho\in\mathcal{T}^\oplus$. These decompositions are illustrated in Figure \ref{fig:decomposition}.
\begin{figure}
    \centering
    \includegraphics[width=0.75\linewidth]{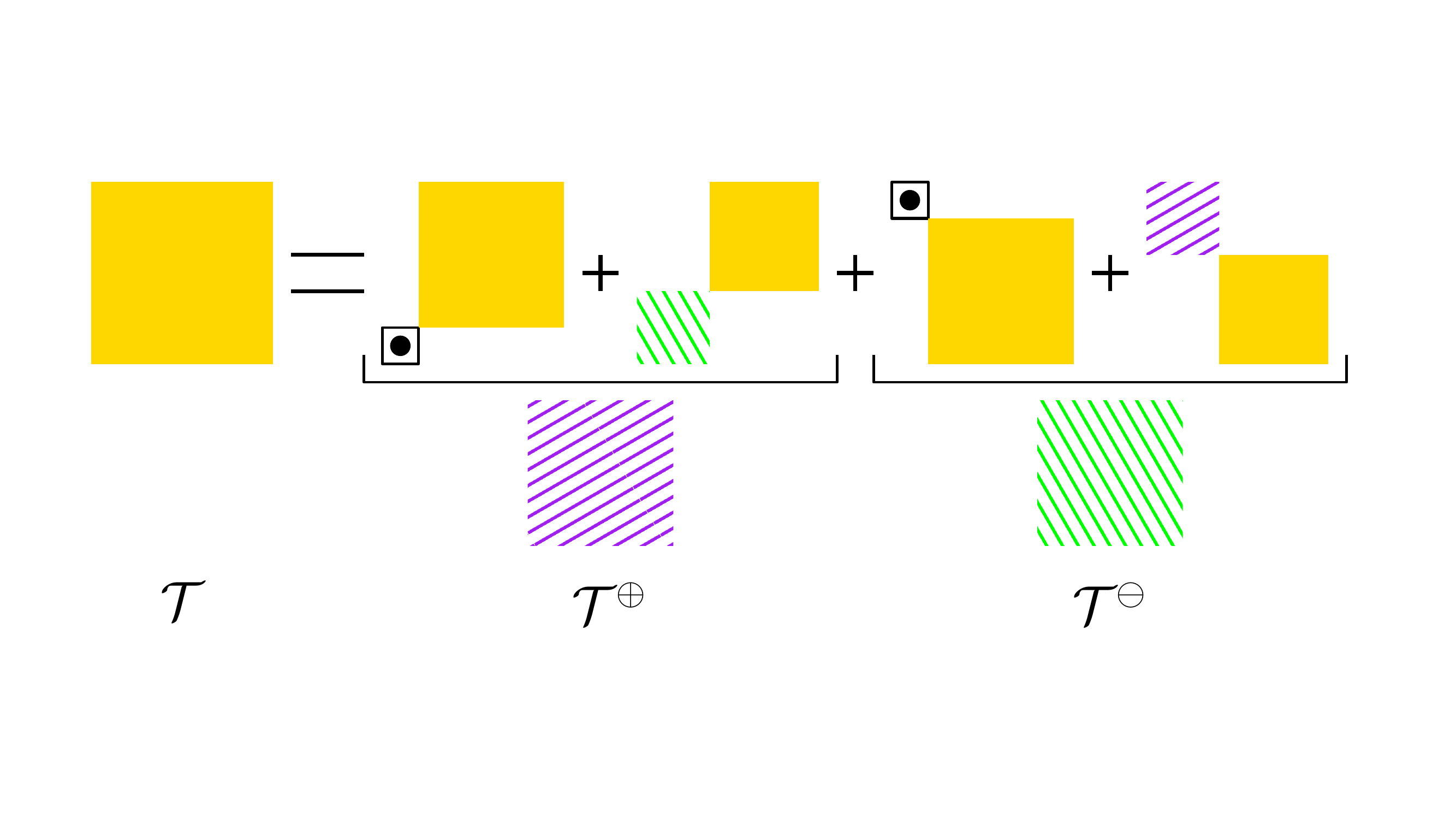}
    \caption{Unique decompositions of sum decomposable and skew-sum decomposable separable permutations.}
    \label{fig:decomposition}
\end{figure}
\begin{theorem}
    The ordinary generating functions satisfy
    \begin{equation}
        \label{background-system}
        \begin{cases}
        T(x) = x + T^\oplus(x) + T^\ominus(x),\\
        T^\oplus(x) = (x + T^\ominus(x))T(x),\\
        T^\ominus(x) = (x + T^\oplus(x))T(x).
        \end{cases}
    \end{equation}
\end{theorem}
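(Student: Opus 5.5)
The plan is to translate each of the three combinatorial decompositions into an identity of generating functions. Throughout, the weight $x^{|\sigma|}$ is multiplicative under both $\oplus$ and $\ominus$, since $|\rho \oplus \tau| = |\rho \ominus \tau| = |\rho| + |\tau|$.

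\emph{First equation.} I would show that $\mathcal{T}$ is the disjoint union of $\{1\}$, $\mathcal{T}^\oplus$ and $\mathcal{T}^\ominus$.
\begin{itemize}
\item \emph{Covering.} $\mathcal{T}$ is the closure of $\{1\}$ under $\oplus$ and $\ominus$, so every separable permutation of length at least $2$ is a sum or skew sum of two nonempty separable permutations. (Equivalently, the only simple separable permutations are $1, 12, 21$.) Both summands are patterns of $\sigma$, so they stay in $\Av(2413,3142)$.
\item \emph{Disjointness.} No permutation is both sum and skew-sum decomposable. If $\sigma = \alpha \oplus \beta$, then $\sigma(1) \le |\alpha|$. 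If also $\sigma = \gamma \ominus \delta$, then $\sigma(1) > |\delta|$ and every value among the last $|\delta|$ positions is at most $|\delta|$.
\item Compare the two cut points $|\alpha|$ and $|\gamma|$. If $|\gamma| \le |\alpha|$, then position $n$ lies in both second blocks, giving $\sigma(n) \le |\delta| < \sigma(1) \le |\alpha| < \sigma(n)$, a contradiction. The case $|\gamma| > |\alpha|$ is symmetric, using position $1$ and position $n$ with the inequalities reversed.
\end{itemize}
Since $1$ has length one, this gives $T = x + T^\oplus + T^\ominus$.

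\emph{Second equation.} I use the unique decomposition established above. The map $(\rho, \tau) \mapsto \rho \oplus \tau$ from $(\{1\} \cup \mathcal{T}^\ominus) \times \mathcal{T}$ to $\mathcal{T}^\oplus$ needs three checks.
\begin{itemize}
\item \emph{Well defined.} The output is separable by closure, and it is sum decomposable because both parts are nonempty.
\item \emph{Surjective.} Take the minimal-length $\rho$ as in the text.
\item \emph{Injective.} If $\rho \oplus \tau = \rho' \oplus \tau'$ with $|\rho| < |\rho'|$, then $\rho' = \rho \oplus \alpha$ is sum decomposable. That is impossible if $\rho' = 1$, and it contradicts the disjointness above if $\rho' \in \mathcal{T}^\ominus$.
\end{itemize}
Multiplicativity of the weight then gives $T^\oplus = (x + T^\ominus)T$.

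\emph{Third equation.} This follows by the same argument with $\oplus$ and $\ominus$ interchanged, or by applying the reverse map (or complement), which swaps the two operations and preserves separability.

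The main obstacle I expect is the disjointness of $\mathcal{T}^\oplus$ and $\mathcal{T}^\ominus$, which both the first equation and the injectivity step depend on. I would handle it with the first/last-entry comparison above.
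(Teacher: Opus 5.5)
Your proposal is correct and follows the paper's route: the unique minimal-$\rho$ decompositions $\sigma=\rho\oplus\tau$ and $\sigma=\rho\ominus\tau$ are translated into generating functions (the paper defers the details to Albert--Homberger--Pantone), and you make the covering, disjointness and injectivity checks explicit. One small simplification: the case split on $|\gamma|$ versus $|\alpha|$ in your disjointness step is unnecessary, because position $1$ always lies in both first blocks and position $n$ always lies in both second blocks, so sum decomposability forces $\sigma(1)<\sigma(n)$ while skew decomposability forces $\sigma(1)>\sigma(n)$.
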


For a proof of this theorem that uses the unique sum and skew-sum
decompositions, see \cite[Section~2]{AlbertHombergerPantone2015}. Note that
their convention includes the empty permutation; replace their $S(x)$ with $1 +
T(x)$ to obtain the same system.

\section{Diagonal Generating Functions}
\label{sec:dgfs}

The diagonals of a permutation refer to the diagonals in its graph as a
function. For a permuation of $[n]$, we number the diagonals from $-(n-1)$ to
$(n-1)$, starting from the top-left, so that the main diagonal has $k=0$ (see
Figure \ref{fig:defn_diag_gen}). In other words, $\sigma \in S_n$ has an entry
on diagonal $k$ if $i - \sigma(i) = k$ for some $i \in [n]$.

In order to keep track of occupied diagonals of a permutation, we define a new
bivariate generating function called the \emph{diagonal generating function}.
\begin{defn}
    \label{defn:dgf}
    Let $\mathcal{C}$ be a set of permutations. Then, the \emph{diagonal generating function} for $\mathcal{C}$ is given by 
    \begin{equation*}
        C(x,q)=\sum_{n, k} a_{n,k}x^nq^k
    \end{equation*}
    where $a_{n,k}$ is the number of permutations in $\mathcal{C}$ of length $n$
    that have at least one point on diagonal $k$.
\end{defn}

\begin{figure}[htbp]
    \centering
    \includegraphics[width=0.5\linewidth]{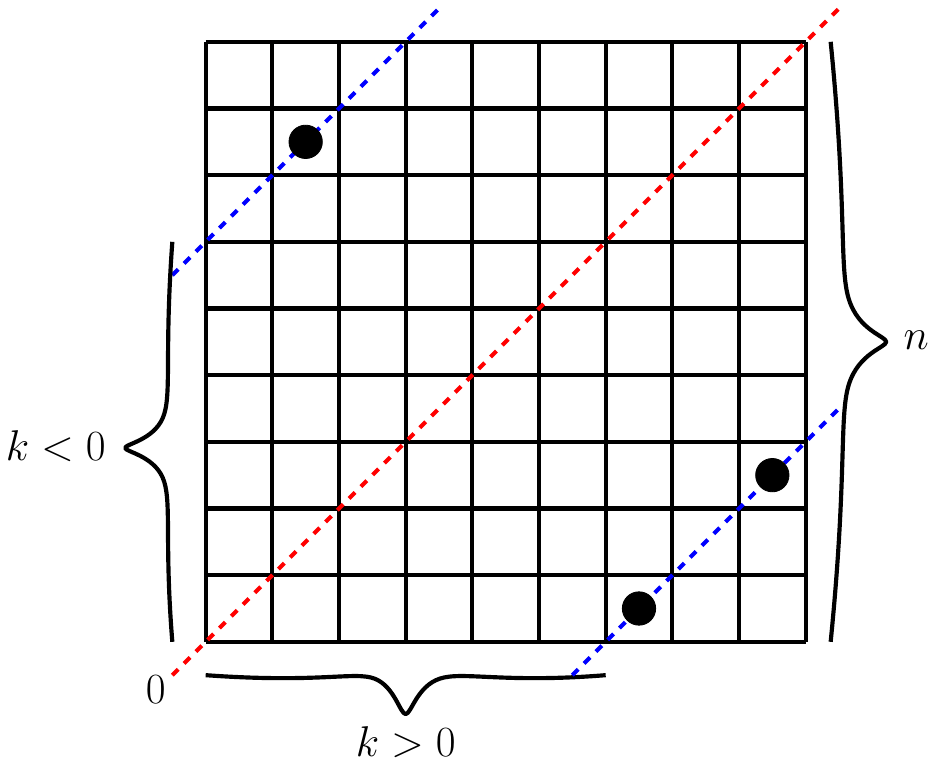}
    \caption{Diagonals as indexed by Definition~\ref{defn:dgf}. A permutation of
    $[n]$ in $\mathcal{C}$ with occupied diagonal $k$ contributes a term of $x^n
    q^k$ to the diagonal generating function of $\mathcal{C}$.}
    \label{fig:defn_diag_gen}
\end{figure}

For example, let $\displaystyle \mathcal{C}=\bigcup_{n\in \mathbb{N}} S_n$. The diagonal generating function starts \begin{equation*}
    C(x,q) =q^0x^1+\left(q^{-1}+q^0+q^1\right)x^2+\left(2q^{-2}+3q^{-1}+4q^0+3q^1+2q^2\right)x^3+\dots,
\end{equation*}
as shown in Figure \ref{fig:sn_example}.
\begin{figure}[htbp]
    \centering
    \includegraphics[width=0.95\linewidth]{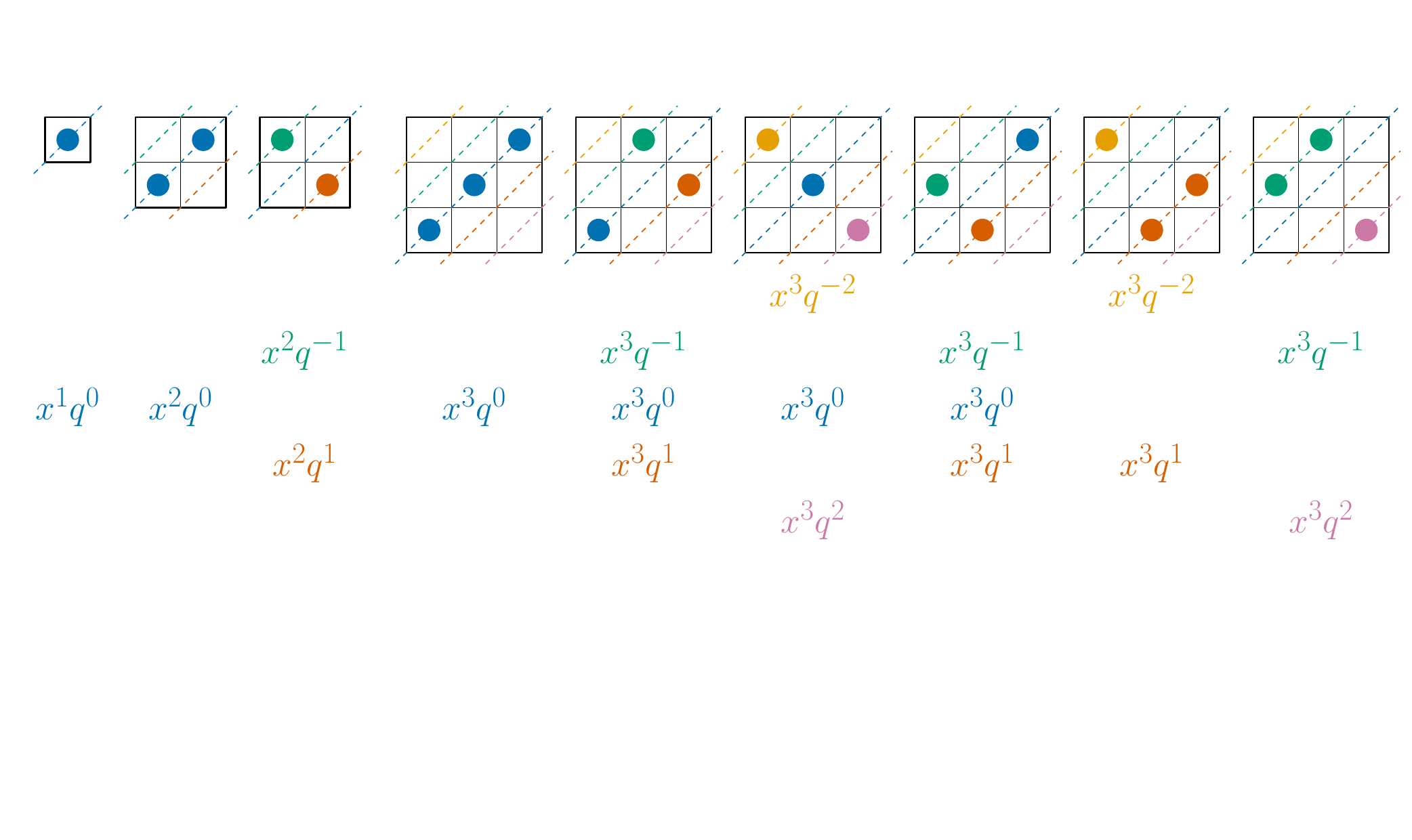}
    \caption{Permutations and the terms they contribute to diagonal generating
    functions.}
    \label{fig:sn_example}
\end{figure}

It is important to note that, in general, the diagonal generating function for $\mathcal{C}$ is not a refinement of the ordinary generating function for $\mathcal{C}$. That is, there is no value that we can substitute for $q$ which always recovers the ordinary generating function. This is because the different permutations in $S_n$ are counted a different number of times. For example, the identity permutation, $12\dots n$, is counted exactly once since all of its points lie on the main diagonal; however, the anti-identity permutation, $n\dots 21$, is counted $n$ times since all of its points lie on different diagonals. 

\section{Separable Derangements}
\label{sec:sepder-gf}

In this section, we use the structure of separable permutations to deduce a system of equations of generating functions.

\begin{defn}
    \label{defn:sequences}
    Define the following objects:
\begin{itemize}
    \item $T(x,q) = \sum_{n, k} a_{n,k} x^n q^k$ is the diagonal generating function for separable permutations.
    \item $T^\oplus(x,q) = \sum_{n, k} a^+_{n, k} x^n q^k$ is the diagonal generating function for sum decomposable separable permutations.
    \item $T^\ominus(x,q) = \sum_{n, k} a^-_{n,k} x^n q^k$ is the diagonal generating function for skew-sum decomposable separable permutations.
\end{itemize}
    All coefficients are understood to vanish outside of their natural supports.
    For example, $a_{n, k} = 0$ if $|k| \geq n$ or $n < 0$.
\end{defn}

Our objective is to express $T(x,q)$, $T^\oplus(x,q)$ and $T^\ominus(x,q)$ in
terms of each other, $T(x)$, $T^\oplus(x)$ and $T^\ominus(x)$, obtaining a
system of equations that implicitly determines $T(x,q)$. For this purpose, we
will use the same structural decomposition as in Section \ref{sec:background},
but in this instance we will need to keep track of the diagonals.

Since $\mathcal{T}^\oplus$ and $\mathcal{T}^\ominus$ are disjoint, and the only element of $\mathcal{T}$ that does not belong to either of them is the permutation of length $1$, we have
\begin{equation}\label{eq:separable}
    T(x,q) = xq^0 + T^\oplus(x,q) + T^\ominus(x,q).
\end{equation}
In order to obtain an equation for $T^\ominus(x,q)$, one needs to understand how the skew-sum operation affects the numbering of the diagonals. Let $\rho,\tau$ be two permutations of lengths $i$ and $k$, and let $\sigma = \rho\ominus\tau$. Then, $\sigma$ has length $i + k$. The $j$-th diagonal of $\rho$ corresponds to the $(j - k)$-th diagonal of $\sigma$, and the $\ell$-th diagonal of $\tau$ corresponds to the $(\ell + i)$-th diagonal of $\sigma$, as shown in Figure \ref{fig:skew_sum}.

As explained in Section \ref{sec:background}, a permutation $\sigma\in\mathcal{T}^\ominus$ can be decomposed uniquely as $\rho\ominus\tau$, where $\rho$ is either $1$ or an element of $\mathcal{T}^\oplus$, and $\tau$ is in $\mathcal{T}$. Let $i$ and $k$ be the lengths of $\rho$ and $\tau$. Then, there are two ways to choose a nonempty diagonal of $\sigma$:
\begin{itemize}
    \item Choosing a nonempty diagonal of $\rho$. Let $j$ be the index of that diagonal in $\rho$. Then $\sigma$ with that marked diagonal contributes a term $x^{i + k}q^{j - k}$ to $T^\ominus(x,q)$. The total contribution of this case when $\rho = 1$ is
    \begin{equation*}
        \sum_k s_kx^{k + 1}q^{0-k} = xq^0T\left(\frac{x}{q}\right).
    \end{equation*}
    And the total contribution of this case when $\rho\in\mathcal{T}^\oplus$ is
    \begin{align*}
        \sum_{i, k, j} a_{i,j}^+s_kx^{i + k}q^{j - k} &= \sum_{i, j} a_{i,j}^+x^iq^j\sum_k s_kx^kq^{-k} \\
        &=T^\oplus(x,q)T\left(\frac{x}{q}\right).
    \end{align*}
    \item Choosing a nonempty diagonal of $\tau$. Let $\ell$ be its index in $\tau$. Then, $\sigma$ with that marked diagonal contributes a term $x^{i + k}q^{\ell + i}$ to $T^\ominus(x,q)$. The total contribution of this case when $\rho = 1$ is
    \begin{equation*}
        \sum_{k, \ell} a_{k,\ell}x^{k + 1}q^{\ell + 1} = xqT(x, q).
    \end{equation*}
    And the total contribution of this case when $\rho\in\mathcal{T}^\oplus$ is
    \begin{align*}
    \sum_{i, k, \ell} s_i^+a_{k,\ell}x^{i + k}q^{\ell + i}
    &= \sum_i s_i^+x^iq^i\sum_{k, \ell} a_{k,\ell}x^kq^\ell \\
    &= T^\oplus(xq)T(x,q).
\end{align*}
\end{itemize}
These cases are disjoint: diagonals from $\rho$ have indices at most $i - k -
1$, and diagonals from $\tau$ have indices at least $i - k + 1$.

\begin{figure}
    \centering
    \includegraphics[width=0.85\linewidth]{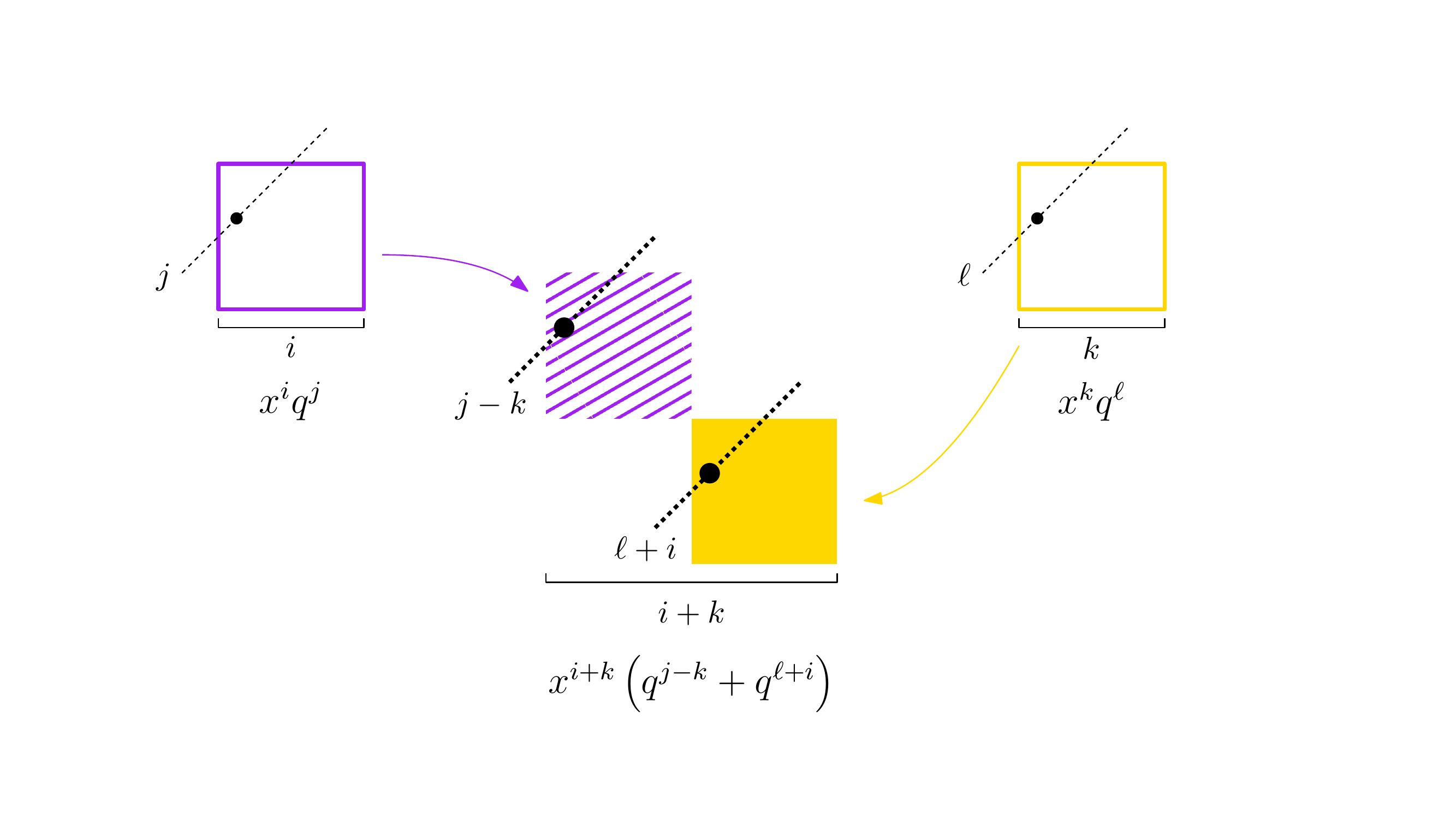}
    \caption{If $\rho$ and $\tau$ have lengths $i$ and $k$, then $\sigma = \rho\ominus\tau$ has length $i + k$. The $j$-th diagonal of $\rho$ corresponds to the $(j - k)$-th diagonal of $\sigma$, and the $\ell$-th diagonal of $\tau$ corresponds to the $(\ell + i)$-th diagonal of $\sigma$.}
    \label{fig:skew_sum}
\end{figure}

Hence,
\begin{equation}\label{eq:skew-sum}
    T^\ominus(x,q) = \left(xq^0 + T^\oplus(x,q)\right)T\left(\frac{x}{q}\right) + \left(xq + T^\oplus(xq)\right)T(x,q).
\end{equation}
In the case of $T^\oplus(x,q)$, the sum operation does not affect the numbering of the diagonals. Indeed, let $\rho,\tau$ be two permutations of lengths $i$ and $k$, and let $\sigma = \rho\oplus\tau$. Then, $\sigma$ has length $i + k$, the $j$-th diagonal of $\rho$ corresponds to the $j$-th diagonal of $\sigma$, and the $\ell$-th diagonal of $\tau$ corresponds to the $\ell$-th diagonal of $\sigma$, as shown in Figure \ref{fig:sum}.

As explained in Section \ref{sec:background}, a permutation $\sigma\in\mathcal{T}^\oplus$ can be decomposed uniquely as $\rho\oplus\tau$, where $\rho$ is either $1$ or an element of $\mathcal{T}^\ominus$, and $\tau$ is in $\mathcal{T}$. Let $i$ and $k$ be the lengths of $\rho$ and $\tau$. Then, there are two ways to choose a nonempty diagonal of $\sigma$:
\begin{itemize}
    \item Choosing a nonempty diagonal of $\rho$, with index $j$. This contributes a term $x^{i + k}q^j$ to $T^\oplus(x,q)$. The total contribution of this case is
    \begin{equation*}
        \sum_k s_kx^{k + 1}q^0 + \sum_{i, k, j} a_{i,j}^-s_kx^{i + k}q^j = \left(xq^0 + T^\ominus(x,q)\right)T(x).
    \end{equation*}
    \item Choosing a nonempty diagonal of $\tau$, with index $\ell$. This contributes a term $x^{i + k}q^\ell$ to $T^\oplus(x,q)$. The total contribution of this case is
    \begin{equation*}
        \sum_{k, \ell} a_{k,\ell}x^{k + 1}q^\ell + \sum_{i, k, \ell} s_i^-a_{k,\ell}x^{i + k}q^\ell = \left(x + T^\ominus(x)\right)T(x,q).
    \end{equation*}
\end{itemize}
However, these two cases are not disjoint. To account for the double counting, one needs to subtract the cases corresponding to diagonals that are nonempty in both $\rho$ and $\tau$. This correction term is given by
\begin{equation*}
    \sum_k a_{k,0}x^{k + 1}q^0 + \sum_{i, k, j} a_{i,j}^-a_{k,j}x^{i + k}q^j = \left(xq^0 + T^\ominus(x,q)\right)\bigodot_q T(x,q),
\end{equation*}
where $\bigodot_q$ denotes the Hadamard product (termwise multiplication) with respect to $q$. By that, we mean
\begin{equation*}
    \left( \sum_j f_j(x) q^j \right) \bigodot_q
    \left( \sum_j g_j(x) q^j \right) = \sum_j f_j(x) g_j(x) q^j.
\end{equation*}

\begin{figure}[!htb]
    \centering
    \includegraphics[width=0.85\linewidth]{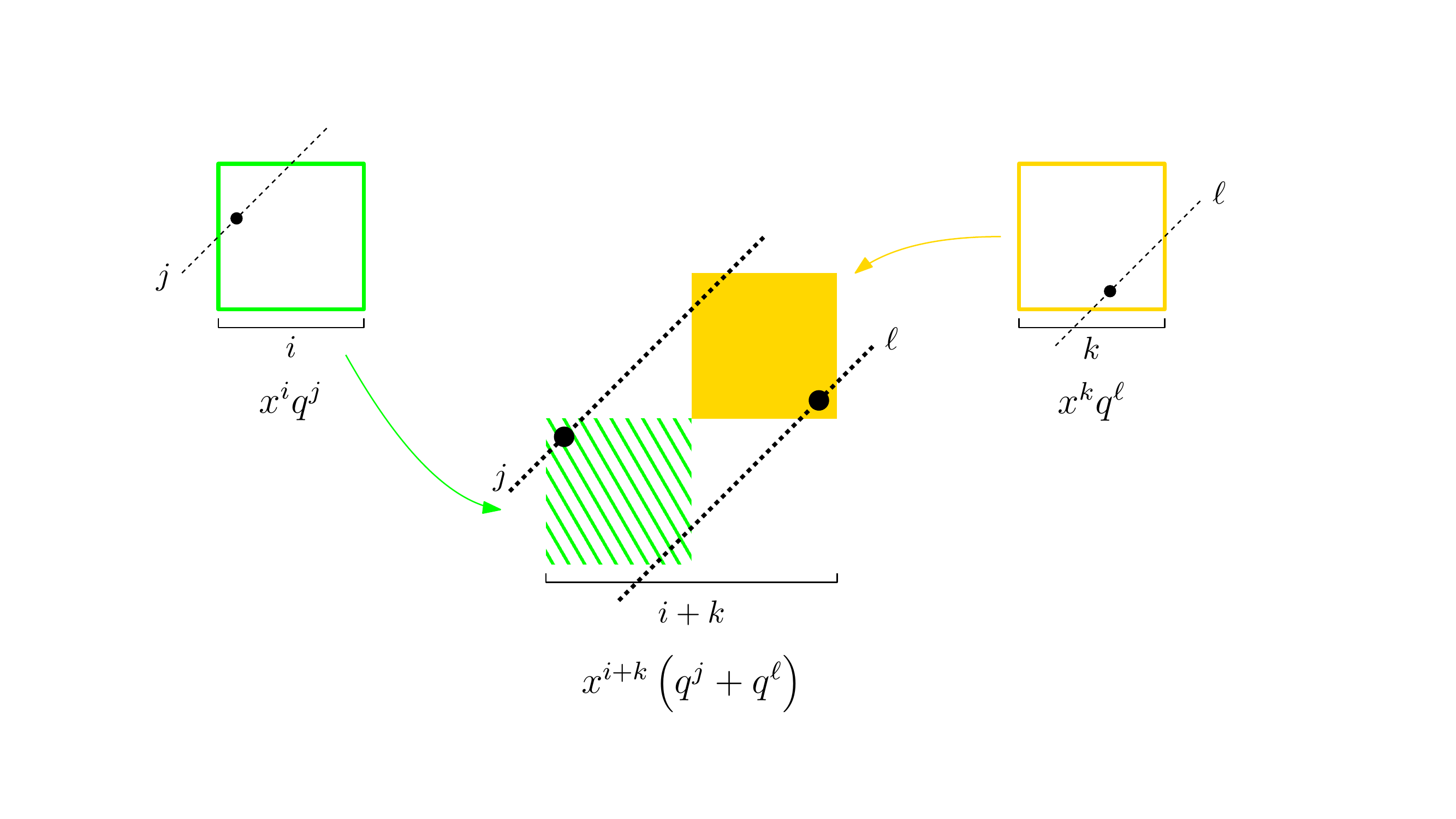}
    \caption{If $\rho$ and $\tau$ have lengths $i$ and $k$, then $\sigma = \rho\oplus\tau$ has length $i + k$. The $j$-th diagonal of $\rho$ and $\ell$-th diagonal of $\tau$ correspond to the $j$-th and $\ell$-th diagonals of $\sigma$.}
    \label{fig:sum}
\end{figure}

Hence,
\begin{equation}\label{eq:sum}
    T^\oplus(x,q) = \left(xq^0 + T^\ominus(x,q)\right)T(x) + \left(x + T^\ominus(x)\right)T(x,q) - \left(xq^0 + T^\ominus(x,q)\right)\bigodot_q T(x,q).
\end{equation}

\begin{theorem}
    The functions $T(x,q)$, $T^\oplus(x,q)$ and $T^\ominus(x,q)$ satisfy the
    following system:
    \begin{equation}
        \label{system}
        \begin{cases}
        T(x,q) &= xq^0 + T^\oplus(x,q) + T^\ominus(x,q),\\
        T^\ominus(x,q) &= \left(xq^0 + T^\oplus(x,q)\right)T\left(\frac{x}{q}\right) + \left(xq + T^\oplus(xq)\right)T(x,q),\\
        T^\oplus(x,q) &= \left(xq^0 + T^\ominus(x,q)\right)T(x) + \left(x + T^\ominus(x)\right)T(x,q) - \left(xq^0 + T^\ominus(x,q)\right)\bigodot_q T(x,q).
        \end{cases}
    \end{equation}
\end{theorem}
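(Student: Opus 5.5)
The system is essentially the three equations \eqref{eq:separable}, \eqref{eq:skew-sum}, and \eqref{eq:sum} derived above, so the proof consists of justifying each one rigorously as a bijective/coefficient identity. I would argue at the level of coefficients: fix $n$ and $k$, and show that $[x^nq^k]$ on each side counts the same set of pairs $(\sigma, k)$ with $\sigma$ in the relevant class and diagonal $k$ of $\sigma$ occupied.

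For the first equation, I partition $\mathcal{T}$ into $\{1\}$, $\mathcal{T}^\oplus$, and $\mathcal{T}^\ominus$. Disjointness of the latter two follows because a sum decomposable permutation has $\sigma(1)\le$ the size of its first block, while a skew decomposable one has $\sigma(1)$ exceeding the size of its second block. Coverage follows from the definition of separable, since every separable permutation of length at least $2$ is a sum or skew sum of smaller separable ones. The permutation $1$ occupies only diagonal $0$, which gives the term $xq^0$.

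For the skew-sum equation, I use the unique decomposition $\sigma=\rho\ominus\tau$ with $\rho\in\{1\}\cup\mathcal{T}^\oplus$ and $\tau\in\mathcal{T}$. The key lemma is the index shift: a point $(a,\rho(a))$ becomes $(a,\rho(a)+k)$, so its diagonal goes from $j$ to $j-k$. A point $(b,\tau(b))$ becomes $(b+i,\tau(b))$, so its diagonal goes from $\ell$ to $\ell+i$. Occupied diagonals coming from $\rho$ lie in $[-(i-1)-k,\,i-1-k]$ and those from $\tau$ lie in $[i-k+1,\,\cdots]$, so these sets are disjoint. The set of occupied diagonals of $\sigma$ is therefore the disjoint union of the two shifted sets. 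Summing over decompositions gives the four products stated: $\rho$'s diagonals are weighted by $x^iq^j$ times $s_kx^kq^{-k}$, and $\tau$'s diagonals by $s_i^+x^iq^i$ (or $xq$ when $\rho=1$) times $x^kq^\ell$.

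For the sum equation, $\sigma=\rho\oplus\tau$ with $\rho\in\{1\}\cup\mathcal{T}^\ominus$, and no shifts occur. The occupied diagonal set of $\sigma$ is the union $D(\rho)\cup D(\tau)$. By inclusion--exclusion, its size-generating contribution is $\sum_{j\in D(\rho)}q^j+\sum_{\ell\in D(\tau)}q^\ell-\sum_{j\in D(\rho)\cap D(\tau)}q^j$. Summing over $\rho$ with weight $x^{|\rho|}$ and over $\tau$ gives the first two terms as products with $T(x)$ and $x+T^\ominus(x)$.

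The main obstacle is the intersection term. Here I must check that summing $[j\in D(\rho)][j\in D(\tau)]x^{|\rho|+|\tau|}q^j$ over independent choices of $\rho$ and $\tau$ factors diagonal-by-diagonal. It does: for fixed $j$, the count is the product of the number of $\rho$ of length $i$ occupying $j$ and the number of $\tau$ of length $k$ occupying $j$. This is exactly the coefficient of $q^j$ in the Hadamard product $(xq^0+T^\ominus(x,q))\bigodot_q T(x,q)$, with the $x$-parts multiplied as ordinary series. The only care needed is that the Hadamard product is taken in $q$ while $x$ multiplies ordinarily, and that all series involved are well-defined formal Laurent series in $q$ that are polynomial for each power of $x$. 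The latter holds because $a_{n,k}=0$ for $|k|\ge n$.
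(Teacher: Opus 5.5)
Your proposal is correct and follows the paper's derivation essentially step for step. The first equation comes from the partition $\mathcal{T}=\{1\}\sqcup\mathcal{T}^\oplus\sqcup\mathcal{T}^\ominus$. The second uses the unique $\ominus$-decomposition, the diagonal shifts $j\mapsto j-k$ and $\ell\mapsto\ell+i$, and the disjoint index ranges. The third uses the unique $\oplus$-decomposition with no shift, inclusion--exclusion, and the Hadamard correction term.

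There is one small slip. Your $\sigma(1)$ argument for $\mathcal{T}^\oplus\cap\mathcal{T}^\ominus=\emptyset$ does not close, because the first block of a sum decomposition and the second block of a skew decomposition can have unrelated sizes. Compare $\sigma(1)$ with $\sigma(n)$ instead: every sum of two nonempty permutations has $\sigma(1)<\sigma(n)$, and every skew sum has $\sigma(1)>\sigma(n)$. The paper simply asserts this disjointness.
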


\section{Recurrences and implementation}
\label{sec:recurrences}

In this section we will give explicit recurrences implied by \eqref{system},
then explain some implementation details about how terms were computed. Refer to
Definition~\ref{defn:sequences} for the meaning of each sequence. The following
formulas use \emph{Iverson bracket} notation from \cite{concrete}, defined as
\begin{equation*}
    [P]
    =
    \begin{cases}
        1 & \text{if $P$ is true} \\
        0 & \text{if $P$ is false}
    \end{cases}
\end{equation*}
for any statement $P$.

Recall that our goal is to compute $s_n - a_{n, 0}$, the number of separable
derangements. Extracting coefficients on $x^n$ in \eqref{background-system} and
$x^n q^k$ in \eqref{system}, followed by some rearranging, gives a set of
interlocking recurrences. We will give them in two parts.

\begin{prop}
    The sequences $s_n$, $s^+_n$, and $s^-_n$ satisfy the recurrences
    \begin{align*}
        s_n &= s^+_n + s^-_n \\
        s^+_n &= \sum_{i = 1}^{n - 1} ([i = 1] + s^-_i) s_{n - i} \\
        s^-_n &= \sum_{i = 1}^{n - 1} ([i = 1] + s^+_i) s_{n - i}
    \end{align*}
    for $n \geq 2$ with initial conditions $(s_1, s^+_1, s^-_1) = (1, 0, 0)$.
\end{prop}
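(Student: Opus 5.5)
The plan is to extract the coefficient of $x^n$ from each equation of the system \eqref{background-system}, using that $T(x)$, $T^\oplus(x)$, $T^\ominus(x)$ are the ordinary generating functions of $s_n$, $s^+_n$, $s^-_n$. Since everything is a formal power series with no constant term, all coefficient extractions are finite sums and no convergence issues arise.

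\textbf{First equation.} Taking $[x^n]$ of $T(x) = x + T^\oplus(x) + T^\ominus(x)$ gives
\begin{equation*}
    s_n = [n = 1] + s^+_n + s^-_n .
\end{equation*}
For $n \geq 2$ the bracket vanishes, giving the first recurrence.

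\textbf{Second equation.} Write $x + T^\ominus(x) = \sum_{i \geq 1} ([i=1] + s^-_i) x^i$. Taking $[x^n]$ of the product $(x + T^\ominus(x))T(x)$ gives the Cauchy convolution
\begin{equation*}
    \sum_{i + j = n} ([i=1] + s^-_i)\, s_j .
\end{equation*}
Both factors have zero constant term, so the sum runs over $1 \leq i \leq n-1$ with $j = n - i \geq 1$. This is exactly the stated formula for $s^+_n$. The third equation is handled identically, with the roles of $\oplus$ and $\ominus$ swapped.

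\textbf{Initial conditions.} The coefficient of $x^1$ gives $s_1 = 1 + s^+_1 + s^-_1$. Both products on the right-hand sides start at $x^2$, so $s^+_1 = s^-_1 = 0$, hence $(s_1, s^+_1, s^-_1) = (1, 0, 0)$. Combinatorially this is also clear: the unique permutation of length $1$ is neither sum nor skew-sum decomposable.

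\textbf{Well-definedness.} The right-hand sides for index $n$ involve only $s_i, s^\pm_i$ with $i \leq n-1$. Therefore the recurrences, together with the initial conditions, determine all terms.

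There is no real obstacle here. The only point needing care is the index range of the convolution, namely that the absence of constant terms forces $1 \le i \le n-1$, and keeping the $[n=1]$ term separate in the first equation.
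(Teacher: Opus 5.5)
Your proposal is correct and follows the paper's own approach. The paper likewise obtains these recurrences by extracting the coefficient of $x^n$ from the system \eqref{background-system}. You also supply the index bookkeeping the paper leaves to ``some rearranging'': the convolution range $1 \le i \le n-1$ and the separate $[n=1]$ term in the first equation.
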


\begin{prop}
    The sequences $a_{n, k}$, $a^+_{n, k}$, and $a^-_{n, k}$ satisfy the
    recurrences
    \begin{align}
    \label{recurrences}
    \begin{split}
        a_{n, k} &= a^+_{n, k} + a^-_{n, k} \\
        a^+_{n,k} &= \sum_{i = 1}^{n - 1}
            \left(
                ([i = 1][k = 0] + a^-_{i, k}) (s_{n - i} - a_{n - i, k})
                +
                ([i = 1] + s^-_i) a_{n - i, k}
            \right) \\
        a^-_{n,k} &= \sum_{i = 1}^{n - 1}
            \left(
                ([i = 1][k + n - i = 0] + a^+_{i, k + n - i}) s_{n - i}
                +
                ([i = 1] + s^+_i) a_{n - i, k - i}
            \right) \\
    \end{split}
    \end{align}
    for $n \geq 2$ with initial conditions $(a_{1, k}, a^+_{1, k}, a^-_{1, k}) = ([k = 0], 0, 0)$.
\end{prop}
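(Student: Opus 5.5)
We extract the coefficient of $x^n q^k$ from each line of the system \eqref{system}, for $n \geq 2$, and rearrange. The case $n = 1$ is immediate: the only separable permutation of length $1$ is the identity. It is neither sum nor skew-sum decomposable and occupies only diagonal $0$, which gives $(a_{1,k}, a^+_{1,k}, a^-_{1,k}) = ([k=0],0,0)$. For $n \geq 2$ the term $xq^0$ contributes nothing, so the first line gives $a_{n,k} = a^+_{n,k} + a^-_{n,k}$. Throughout we write $xq^0 + T^\oplus(x,q) = \sum_{i,j}([i=1][j=0] + a^+_{i,j})x^iq^j$, and similarly for $\ominus$. Likewise $x + T^\ominus(x) = \sum_i([i=1]+s^-_i)x^i$ and $xq + T^\oplus(xq) = \sum_i ([i=1]+s^+_i)x^iq^i$.

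For the skew-sum line, the product $\bigl(xq^0 + T^\oplus(x,q)\bigr)T(x/q)$ has $x^nq^k$-coefficient
\[
\sum_{i + m = n}\ \sum_{j - m = k} ([i=1][j=0] + a^+_{i,j}) s_m .
\]
Setting $m = n-i$ and $j = k + n - i$ gives the first summand in the formula for $a^-_{n,k}$. The product $\bigl(xq + T^\oplus(xq)\bigr)T(x,q)$ has coefficient $\sum_i ([i=1]+s^+_i)a_{n-i,k-i}$. Since every permutation in these expansions has length at least $1$, both sums run over $1 \le i \le n-1$. Terms outside the natural support vanish by the convention in Definition~\ref{defn:sequences}.

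For the sum line, the three terms on the right of \eqref{eq:sum} have $x^nq^k$-coefficients
\[
\sum_i ([i=1][k=0]+a^-_{i,k})s_{n-i},\qquad
\sum_i ([i=1]+s^-_i)a_{n-i,k},\qquad
\sum_i ([i=1][k=0]+a^-_{i,k})a_{n-i,k}.
\]
The third coefficient comes directly from the definition of $\bigodot_q$: the Hadamard product multiplies $q^k$-coefficients, which are power series in $x$, so we take their Cauchy product in $x$. Subtracting the third sum from the first and grouping them gives $([i=1][k=0]+a^-_{i,k})(s_{n-i}-a_{n-i,k})$, which is the stated formula.

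The only step needing care is the index bookkeeping. In the $T(x/q)$ term we must get the substitution $j = k+n-i$ right. In the Hadamard term we must confirm that it really is a Cauchy product in $x$ at a fixed power of $q$. Finally, we must check that the boundary conventions $a_{n,k} = 0$ for $|k| \ge n$ make the sums over $1 \le i \le n-1$ exact. These checks are routine, and the main obstacle is simply getting these index manipulations right.
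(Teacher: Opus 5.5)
Your proposal is correct and follows the paper's own route. The paper obtains these recurrences by extracting the $x^nq^k$ coefficients of the system \eqref{system} and rearranging, and your index bookkeeping supplies exactly the routine details the paper leaves implicit: the substitution $j = k+n-i$ in the $T(x/q)$ term, reading the Hadamard term as a Cauchy product in $x$ at fixed $q^k$, and the range $1 \le i \le n-1$.
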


\subsection{Implementation details and timing}

It is straightforward to implement these recurrences on a computer. We first
implemented them in Maple by hand, then translated them to C with GPT-5.6 Sol.
The FLINT library was used to handle large-integer arithmetic \cite{flint}. The
code can be found at the repository \url{https://github.com/rdbliss/sepder}.

Below is a rough upper-bound estimate on the runtime of computing with the
recurrences \eqref{recurrences}. It accounts for the cost of large-integer
arithmetic.

\begin{theorem}
    The numbers $a_{i, k}$ with $1 \leq i \leq n$ and $|k| < i$ can be computed
    with $O(n^3 M(n))$ bit operations, where $M(n)$ is the number of bit
    operations needed to multiply integers with $n$ bits.
\end{theorem}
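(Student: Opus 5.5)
We propose to prove the bound by a direct operation count on the recurrences \eqref{recurrences}, together with the first proposition of Section~\ref{sec:recurrences} for the auxiliary sequences $s_n, s^+_n, s^-_n$, combined with a crude bound on the bit size of every quantity involved. The computation proceeds in increasing order of $n$. At stage $m$ we assume that all $s_i, s^\pm_i$ with $i<m$, and all $a_{i,k}, a^\pm_{i,k}$ with $i<m$ and $|k|<i$, are known. We then compute $s_m, s^\pm_m$, and afterwards, for each of the $2m-1$ values $|k|<m$, compute $a^+_{m,k}$, $a^-_{m,k}$ and $a_{m,k}$. Every right-hand side at stage $m$ refers only to indices $i$ and $m-i$, both strictly less than $m$. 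Index pairs outside the natural support are simply zero by the convention of Definition~\ref{defn:sequences}, so there are no circular dependencies and the ordering is valid.

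\textbf{Counting arithmetic operations.} For fixed $(m,k)$, each of $a^+_{m,k}$ and $a^-_{m,k}$ is a sum over $i=1,\dots,m-1$. Each term involves a constant number of Iverson brackets, additions, subtractions and multiplications of previously computed integers. Hence each entry costs $O(m)$ multiplications and additions. There are $O(m)$ values of $k$ at stage $m$, so stage $m$ costs $O(m^2)$ arithmetic operations. Summing over $m\le n$ gives $O(n^3)$ operations. The scalar recurrences for $s_m, s^\pm_m$ add only $O(n^2)$ operations overall.

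\textbf{Bounding bit sizes.} This is the only step needing care. Each of $a_{i,k}, a^\pm_{i,k}$ counts a subset of the separable permutations of length $i$, so it is at most $s_i$; the same holds for $s^\pm_i$. Moreover $s_i\le |S_i|\le i!$, or better, $s_i\le 6^i$ from the algebraic generating function in the introduction, whose singularity is at $3-2\sqrt2>1/6$. Hence every stored integer, and every partial product $s_{m-i}a_{i,k}$ or partial sum of $m$ such products, has $O(n)$ bits. Note also that $s_{n-i}-a_{n-i,k}\ge 0$, so no sign or size blow-up occurs. Each multiplication therefore costs $M(O(n))=O(M(n))$, using the standard assumption that $M(cn)=O(M(n))$. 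Each addition costs $O(n)\le O(M(n))$.

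Multiplying the $O(n^3)$ operation count by the $O(M(n))$ cost per operation gives the claimed $O(n^3M(n))$ bit operations. Indexing and memory access for the $O(n^2)$ stored values are dominated by the arithmetic. The main (mild) obstacle is justifying the $O(n)$-bit bound uniformly, including for intermediate sums. The subset-counting interpretation handles this, since each intermediate sum is at most $m\cdot s_n^2$, which still has $O(n)$ bits.
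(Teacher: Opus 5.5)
Your proposal is correct and follows the paper's argument: evaluate the recurrences row by row, with $O(m)$ arithmetic operations per entry and $O(m)$ entries per row, and bound every operand by $O(n)$ bits using $0 \le a_{i,k}, a^\pm_{i,k} \le s_i$ together with the exponential growth of the Schr\"oder numbers. You are slightly more careful than the paper, since you state the assumption $M(cn) = O(M(n))$ and bound the intermediate sums explicitly, whereas the paper uses both implicitly.
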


\begin{proof}
    The recurrences \eqref{recurrences} compute $a_{n, k}$ by ``row.'' Once the
    previous rows are known, \eqref{recurrences} requires $O(n)$ additions and
    multiplications. There are $O(n)$ entries, so the $n$th row takes $O(n^2
    M(\alpha_n))$, where $\alpha_n$ is an upper bound for the
    bit-lengths of the $a_{i, k}$ for $i < n$.

    The asymptotics of the Schr\"oder numbers $s_n$ are well-known
    \cite{OEISA6318} and imply that the bit-length of $s_n$ is $O(n)$. Since $0
    \leq a_{n, k}, a^\pm_{n, k} \leq s_n$, the same is true for $a_{n, k}$ and
    $a^\pm_{n, k}$, so we can take $\alpha_n = O(n)$. Therefore the $n$th row
    costs $O(n^2 M(n))$. Summing over all rows, and assuming that $M(n)$ is
    increasing, gives the total cost
    \begin{equation*}
        \sum_{i = 1}^n O(i^2 M(i)) \leq M(n) \sum_{i = 1}^n O(i^2) = O(n^3 M(n)). \qedhere
    \end{equation*}
\end{proof}

There are a number of performance improvements to be made for the recurrences.
First, by applying the inversion bijection we see that $a_{n, k} = a_{n, -k}$ for all $k$, so it is unnecessary to compute
$a_{n, k}$ for $k < 0$. The same is true for $a^+_{n, k}$ and $a^-_{n, k}$.
Second, the computation of $a_{n, k}$ depends only on values of $a_{i, k}$ with
$i < n$; in other words, every entry in a new row can be computed independently
in parallel. Finally, if the goal is to only compute $a_{n, k}$ for some large
$n$, then it is unnecessary to compute all $a_{i, j}$ with $1 \leq i \leq n$
and $0 \leq j < i$. The recurrence \eqref{recurrences} shows that $a_{n, k}$
only depends immediately on three lines of terms:
\begin{enumerate}
    \item $a_{i, k}$ for $i < n$
    \item $a_{i, k + n - i}$ for $(k + n) / 2 < i < n$
    \item $a_{n - i, k - i}$ for $i < (k + n) / 2$
\end{enumerate}
Repeating this idea on the points on these lines shows that only a ``diamond''
of terms need to be computed to determine $a_{n, 0}$. (Or a half-diamond if the
symmetry optimization $a_{n, k} = a_{n, -k}$ is used.) These performance
optimizations give constant-factor improvements. The asymptotic runtime is
unchanged from $O(n^3 M(n))$. Experimental runtime data from our C
implementation is in Table~\ref{tab:sepder-benchmark}.

There is one more optimization, though it is slightly more involved. Rather than
tracking \emph{occupied} diagonals, which leads to \eqref{recurrences}, it is
computationally simpler to track \emph{empty} diagonals. Diagonal $k$ is empty
in $\sigma \oplus \tau$ if and only if it is empty in $\sigma$ and $\tau$. This
eliminates the need for inclusion-exclusion in \eqref{recurrences}, which
reduces the number of multiplications in the equation for $a^+_{n,k}$. A similar
optimization works for $a^-_{n,k}$. This conceptual change does not change the
asymptotic runtime, nor does it lead to any significantly different generating
function equations.

\begin{table}[t]
    \centering
    \begin{tabular}{r|r|r}
        $n$
        & Median time (s)
        & Time range (s) \\
        \hline
         100 &  0.01 &  0.01--0.04 \\
         200 &  0.03 &  0.03--0.03 \\
         300 &  0.08 &  0.07--0.08 \\
         400 &  0.25 &  0.22--0.25 \\
         500 &  0.70 &  0.69--0.72 \\
         600 &  1.53 &  1.51--1.70 \\
         700 &  2.85 &  2.85--2.89 \\
         800 &  4.91 &  4.85--4.95 \\
         900 &  7.92 &  7.92--7.99 \\
        1000 & 12.33 & 12.21--12.89
    \end{tabular}
    \caption{Time to compute $a_{n, 0}$ for different $n$. Each row reports the
    median and range of three fresh runs. The benchmark was performed on an AMD Ryzen AI 7 445, a 6-core CPU with 12 threads.}
    \label{tab:sepder-benchmark}
\end{table}

\section{Asymptotics}
\label{sec:asymptotics}

In \cite[Question 4.5]{Vatter2026Problems}, Vatter asks the following question:
What is the limiting proportion of derangements among separable permutations,
assuming this limit exists? In this section we will provide some partial
answers.

Vatter originally suggested that $b_n / s_n$ looks to be monotonically
decreasing, and perhaps converging to a constant in $[1/5, 1/4]$. We observe
that $b_n / s_n$ is monotonically decreasing for $5 \leq n \leq 3000$, but it
seems to decrease to 0. See Figure~\ref{fig:empirical-ratio}.

\begin{figure}[t]
    \centering
    \includegraphics[width=\textwidth]{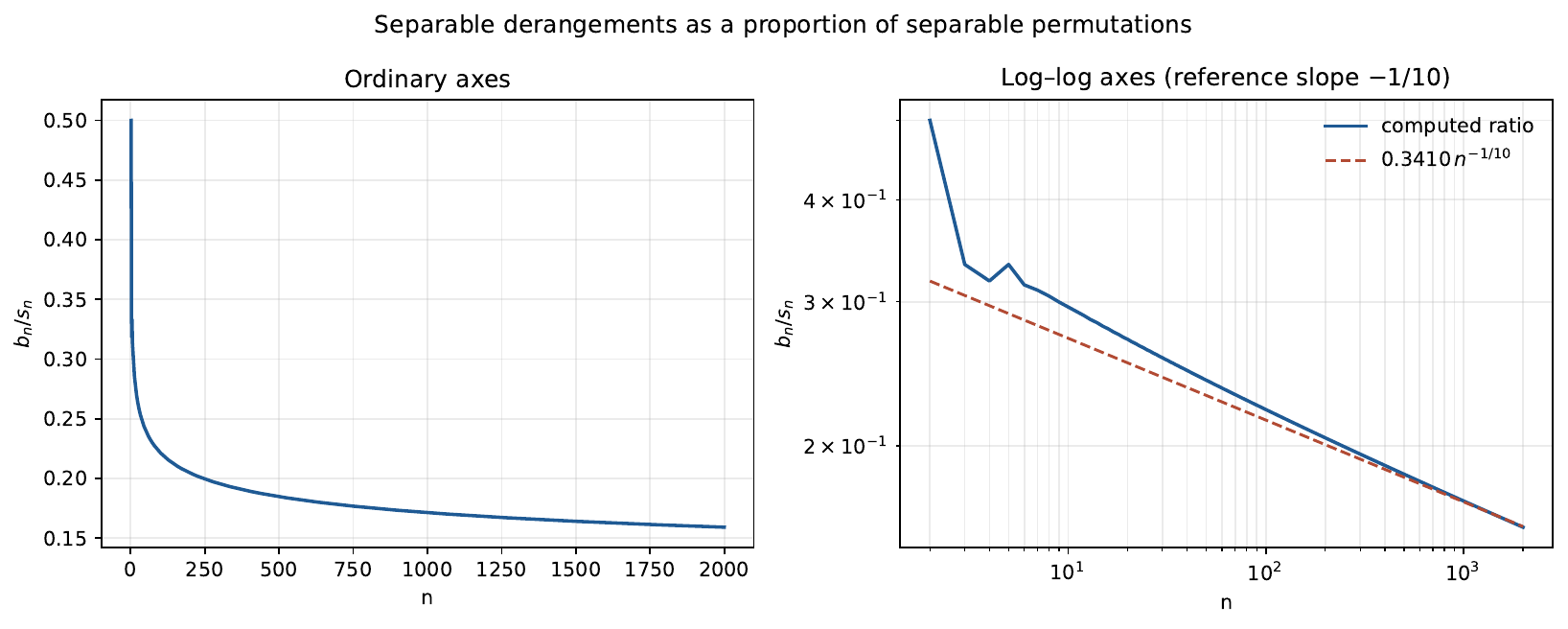}
    \caption{The ratio $b_n / s_n$ up to $n = 2000$. On the right, the dotted
    line shows the projected asymptotics if Conjecture~\ref{conjecture-asym}
    were true with $\alpha = 1.6$. The intercept was fit with numerical data.}
    \label{fig:empirical-ratio}
\end{figure}

The following results put some theoretical limits on how rare derangements can
be among the separable permutations.

\begin{lemma}
    \label{lemma:bound}
    For any integer $n \geq 1$ we have $b_{2n} \geq s_n^2$ and $b_{2n + 1} \geq
    s_n (s_{n + 1} - s_n)$. For $n \geq 3$ we have $b_n \leq s_n - 2 s_{n - 1} +
    s_{n - 2}$.
\end{lemma}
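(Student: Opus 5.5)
The lower bounds come from an explicit injection via skew sums, and the upper bound from counting separable permutations with a fixed point at either end. The key tool is the diagonal bookkeeping from Section~\ref{sec:sepder-gf}. If $\rho$ and $\tau$ have lengths $i$ and $k$, then diagonal $j$ of $\rho$ becomes diagonal $j-k$ of $\rho\ominus\tau$, and diagonal $\ell$ of $\tau$ becomes diagonal $\ell+i$. A permutation is a derangement exactly when diagonal $0$ is empty. Distinct pairs $(\rho,\tau)$ of fixed lengths give distinct skew sums, and $\rho\ominus\tau$ is separable whenever $\rho$ and $\tau$ are.

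\emph{Even case.} Take $\rho,\tau\in\mathcal{T}$, both of length $n$. The diagonals of $\rho$ have indices $j\le n-1$, so in $\sigma=\rho\ominus\tau$ they land at $j-n\le -1$. The diagonals of $\tau$ have indices $\ell\ge -(n-1)$, so they land at $\ell+n\ge 1$. Hence diagonal $0$ of $\sigma$ is empty for every such pair, and we get $s_n^2$ distinct separable derangements of $[2n]$, so $b_{2n}\ge s_n^2$.

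\emph{Odd case.} Take $\rho$ of length $n$ and $\tau$ of length $n+1$. Diagonals of $\rho$ land at $j-(n+1)\le -2$, which is harmless. Diagonals of $\tau$ land at $\ell+n$, and this equals $0$ only if $\ell=-n$, the extreme top-left diagonal of $\tau$. Since $i-\tau(i)=-n$ forces $i=1$ and $\tau(1)=n+1$, this happens exactly when $\tau(1)=n+1$. The separable $\tau$ of length $n+1$ with $\tau(1)=n+1$ are exactly $1\ominus\tau'$ with $\tau'$ separable of length $n$, so there are $s_n$ of them. The converse direction, that $\tau'$ is separable, holds because separable permutations form a pattern class $\Av(2413,3142)$ and $\tau'$ is a pattern in $\tau$. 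Excluding these leaves $s_{n+1}-s_n$ choices of $\tau$, giving $b_{2n+1}\ge s_n(s_{n+1}-s_n)$.

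\emph{Upper bound.} For $n\ge 3$, count separable $\sigma\in S_n$ with a fixed point at an end. Those with $\sigma(1)=1$ are exactly $1\oplus\tau$ with $\tau$ separable of length $n-1$, again by closure under $\oplus$ and under taking patterns. So there are $s_{n-1}$ of them, and by symmetry $s_{n-1}$ have $\sigma(n)=n$. Those with both are $1\oplus\tau\oplus 1$ with $\tau$ separable of length $n-2\ge 1$, which uses $n\ge 3$; there are $s_{n-2}$ of them. By inclusion--exclusion at least $2s_{n-1}-s_{n-2}$ separable permutations of $[n]$ are not derangements, so $b_n\le s_n-2s_{n-1}+s_{n-2}$.

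The only step needing real care is the boundary analysis in the odd case: identifying exactly which $\tau$ put a point on diagonal $-n$, and checking that the count of those is $s_n$.
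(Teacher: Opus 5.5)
Your proof is correct and follows essentially the same route as the paper. The lower bounds come from skew sums $\rho\ominus\tau$ of separable permutations of lengths $(n,n)$ and $(n,n+1)$, where the odd case fails exactly when $\tau(1)=n+1$, i.e.\ for $s_n$ choices of $\tau$. The upper bound is inclusion--exclusion over separable permutations with $\sigma(1)=1$ or $\sigma(n)=n$. The only difference is presentational: you certify the empty main diagonal using the skew-sum diagonal shifts $j\mapsto j-k$ and $\ell\mapsto \ell+i$, whereas the paper checks the fixed-point condition directly.
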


\begin{proof}
    If $\sigma, \tau \in S_n$ are any permutations of length $n$, then $\sigma
    \ominus \tau$ is a derangement. In particular, this is also true if
    $\sigma$ and $\tau$ are separable, so there are at least $s_n^2$ separable
    derangements of length $2n$.

    Let $\sigma \in S_n$ and $\tau \in S_{n + 1}$ be separable. Then $\sigma
    \ominus \tau$ is a derangement if and only if $\tau(1) \leq n$, meaning
    that $\tau$ does not begin with its maximum. Any separable permutation
    which begins with its maximal value is $1 \ominus \tau'$ for some separable
    $\tau'$ of length one less, and so there are exactly $s_n$ different $\tau$
    which would not create a derangement. Therefore, there are at least $s_n
    (s_{n + 1} - s_n)$ derangements of length $2n + 1$.

    For the upper bound, note that no separable derangement of $[n]$ starts
    with $1$ or ends with $n$. There are $s_n$ separable permutations; $s_{n -
    1}$ of them start with $1$, $s_{n - 1}$ of them end in $n$, and $s_{n - 2}$
    of them do both. By inclusion-exclusion, $b_n \leq s_n - 2 s_{n
    - 1} + s_{n - 2}$.
\end{proof}

For our later results, it is convenient to recall the asymptotics of the
Schr\"oder numbers \cite{OEISA6318}:
\begin{equation}
    \label{eq:schroder}
    s_n \sim C \lambda^n n^{-3/2}
\end{equation}
where $C$ is a positive constant and
\begin{equation*}
    \lambda = 3 + 2 \sqrt{2}.
\end{equation*}

\begin{theorem}
    There exists a constant $\beta > 0$ such that
    \begin{equation*}
        \beta n^{-3/2} \leq \frac{b_n}{s_n} \leq 0.69
    \end{equation*}
    for sufficiently large $n$.
\end{theorem}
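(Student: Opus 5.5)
We will derive both inequalities from Lemma~\ref{lemma:bound} combined with the Schr\"oder asymptotics \eqref{eq:schroder}.

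\textbf{Upper bound.} Lemma~\ref{lemma:bound} gives, for $n \geq 3$,
\begin{equation*}
    \frac{b_n}{s_n} \leq 1 - 2\frac{s_{n-1}}{s_n} + \frac{s_{n-2}}{s_n}.
\end{equation*}
By \eqref{eq:schroder}, $s_{n-1}/s_n \to 1/\lambda$ and $s_{n-2}/s_n \to 1/\lambda^2$. Hence the right side tends to
\begin{equation*}
    (1 - 1/\lambda)^2 = \left(\frac{2+2\sqrt2}{3+2\sqrt2}\right)^2 .
\end{equation*}
Since $1/\lambda = 3 - 2\sqrt2 \approx 0.1716$, this limit is $(2\sqrt2-2)^2 = 12 - 8\sqrt2 \approx 0.6863 < 0.69$. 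So for sufficiently large $n$ the ratio is at most $0.69$. The only care needed is the arithmetic confirming that the limit lies strictly below $0.69$, so that convergence of the ratios suffices.

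\textbf{Lower bound, even indices.} For $N = 2n$, the lemma gives $b_{2n} \geq s_n^2$. Substituting \eqref{eq:schroder},
\begin{equation*}
    \frac{s_n^2}{s_{2n}} \sim \frac{C^2\lambda^{2n} n^{-3}}{C\lambda^{2n}(2n)^{-3/2}} = C\,2^{3/2} n^{-3/2},
\end{equation*}
which is $\asymp N^{-3/2}$ because $n = N/2$.

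\textbf{Lower bound, odd indices.} For $N = 2n+1$, the lemma gives $b_{2n+1} \geq s_n(s_{n+1}-s_n)$. Since $s_{n+1} - s_n \sim (1-1/\lambda)s_{n+1}$,
\begin{equation*}
    \frac{s_n(s_{n+1}-s_n)}{s_{2n+1}} \sim (1-1/\lambda)\,\frac{C^2 \lambda^{2n+1} n^{-3/2}(n+1)^{-3/2}}{C\lambda^{2n+1}(2n+1)^{-3/2}},
\end{equation*}
which is again a positive constant times $N^{-3/2}$.

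Each parity class therefore satisfies $b_N/s_N \geq c\,N^{-3/2}$ for large $N$, with a positive constant $c$ specific to that class. Taking $\beta$ slightly smaller than the minimum of the two limiting constants, for instance half of it, absorbs the $(1+o(1))$ factors and yields the bound for all sufficiently large $N$.

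The main obstacle is simply bookkeeping of constants across the two parities. The proof carries no real difficulty, since everything reduces to ratio limits of Schr\"oder numbers.
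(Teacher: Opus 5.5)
Your proposal is correct and follows essentially the same route as the paper: both bounds come from Lemma~\ref{lemma:bound} combined with the Schr\"oder asymptotics \eqref{eq:schroder}. The lower bound is handled separately for even and odd lengths, and the upper bound passes to the limit $1 - 2/\lambda + 1/\lambda^2 = (1-1/\lambda)^2 = 12 - 8\sqrt{2} \approx 0.6863$; your explicit choice of $\beta$ as half the smaller parity constant only spells out a step the paper leaves implicit.
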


\begin{proof}
    By Lemma~\ref{lemma:bound}, we have
    \begin{equation*}
        \frac{b_{2n}}{s_{2n}} \geq \frac{s_n^2}{s_{2n}} \sim C \left(\frac{2}{n}\right)^{3/2}.
    \end{equation*}
    This implies $b_{2n} / s_{2n} = \Omega(n^{-3/2})$. Similarly,
    \begin{equation*}
        \frac{b_{2n + 1}}{s_{2n + 1}} \geq \frac{s_n(s_{n + 1} - s_n)}{s_{2n + 1}}
        \sim C \left( \frac{2}{n} \right)^{3/2} (1 - \lambda^{-1}).
    \end{equation*}
    This implies $b_{2n + 1} / s_{2n + 1} = \Omega(n^{-3/2})$. Taken together,
    these statements imply $b_n / s_n = \Omega(n^{-3/2})$.

    For the other direction, the upper bound part of Lemma~\ref{lemma:bound}
    implies
    \begin{equation*}
        \frac{b_n}{s_n} \leq 1 - 2\frac{s_{n - 1}}{s_n} + \frac{s_{n - 2}}{s_n}.
    \end{equation*}
    Letting $n \to \infty$ in the right-hand side produces
    \begin{equation*}
        1 - \frac{2}{\lambda} + \frac{1}{\lambda^2} = 0.68629\dots.
    \end{equation*}
    In particular, $b_n / s_n \leq 0.69$ for sufficiently large $n$.
\end{proof}

\begin{theorem}
    Let $\lambda = 3 + 2 \sqrt{2}$. Then
    \begin{equation*}
        \lim_{n \to \infty} b_n^{1/n} = \lim_{n \to \infty} s_n^{1/n} = \lambda.
    \end{equation*}
\end{theorem}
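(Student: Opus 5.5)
The plan is to squeeze $b_n^{1/n}$ between two bounds that both tend to $\lambda$, using the preceding theorem $\beta n^{-3/2} \le b_n/s_n \le 0.69$ for large $n$.

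First, the statement $\lim s_n^{1/n} = \lambda$ follows directly from the Schr\"oder asymptotics \eqref{eq:schroder}. We have
\begin{equation*}
    s_n^{1/n} = C^{1/n}\, \lambda\, n^{-3/(2n)},
\end{equation*}
and both $C^{1/n}$ and $n^{-3/(2n)} = \exp(-\tfrac{3}{2}\log n / n)$ tend to $1$.

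For the upper bound on $b_n$, the trivial inclusion $b_n \le s_n$ (every separable derangement is separable) gives $\limsup b_n^{1/n} \le \lim s_n^{1/n} = \lambda$. We could equally use $b_n \le 0.69\, s_n$ from the previous theorem.

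For the lower bound, the previous theorem gives $b_n \ge \beta n^{-3/2} s_n$ for all sufficiently large $n$. Taking $n$th roots,
\begin{equation*}
    b_n^{1/n} \ge \beta^{1/n}\, n^{-3/(2n)}\, s_n^{1/n}.
\end{equation*}
Each of the first two factors tends to $1$, and the last tends to $\lambda$. Hence $\liminf b_n^{1/n} \ge \lambda$, and the two limits coincide.

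There is no real obstacle here. The only point needing care is that $\beta > 0$ is a fixed constant, which makes $\beta^{1/n} \to 1$ legitimate. Also, the lower bound only holds for sufficiently large $n$, but that does not affect the limit.

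If one wanted to avoid relying on the previous theorem, the plan would instead apply Lemma~\ref{lemma:bound} directly. It gives $b_{2n}^{1/(2n)} \ge (s_n^{1/n})^{1/2\cdot 2}\to\lambda$, and the same holds along odd indices using $s_n(s_{n+1}-s_n) \sim (1-\lambda^{-1})\,s_n s_{n+1}$.
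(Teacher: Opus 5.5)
Your argument is correct and is essentially the paper's squeeze. The paper applies Lemma~\ref{lemma:bound} directly, using $s_n^2 \le b_{2n} \le s_{2n}$ and the analogous odd-index bounds, while your main route goes through the preceding theorem's bound $b_n \ge \beta n^{-3/2} s_n$ (itself derived from that lemma), which spares you the parity split; your fallback paragraph is exactly the paper's proof, though its exponent is garbled and should read $b_{2n}^{1/(2n)} \ge (s_n^2)^{1/(2n)} = s_n^{1/n} \to \lambda$.
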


\begin{proof}
    The Schr\"oder limit is a consequence of the asymptotics
    \eqref{eq:schroder} of $s_n$. For $b_n$, note that we have the inequality
    \begin{equation*}
        s_n^2 \leq b_{2n} \leq s_{2n}.
    \end{equation*}
    Taking $2n$th roots on both sides shows that $\lim_{n \to \infty}
    b_{2n}^{1/(2n)} = \lambda$. The odd $n$ case is handled similarly.
\end{proof}

The previous result shows that $b_n > (\lambda - \epsilon)^n$ is eventually true
for any $\epsilon \in (0, \lambda)$. In particular, if $b_n / s_n$ goes to $0$,
it does not do so for ``trivial'' exponential growth reasons. We conjecture the
following.

\begin{conjecture}
    \label{conjecture-asym}
    There exist constants $C' > 0$ and $\alpha \in (3/2, 3]$ such that
    \begin{equation*}
        b_n \sim C' \frac{\lambda^n}{n^\alpha}.
    \end{equation*}
    Accordingly,
    \begin{equation*}
        \frac{b_n}{s_n} = \Theta(n^{3/2 - \alpha})
    \end{equation*}
\end{conjecture}

How would we estimate $\alpha$ if Conjecture~\ref{conjecture-asym} were true? To
remove $C'$ from the picture, form quotients of terms a fixed-proportion away:
\begin{equation*}
    \frac{b_{2n}}{b_n} \sim \lambda^n 2^{-\alpha}
\end{equation*}
This relation is equivalent to
\begin{equation*}
    \log \frac{b_{2n}}{b_n} = n \log \lambda - \alpha \log 2 + o(1),
\end{equation*}
so we can use
\begin{equation}
    \label{eq:estimator}
    \alpha \approx n \log_2 \lambda - \log_2 \frac{b_{2n}}{b_n}
\end{equation}
as an estimator. Empirically it seems that the expression on the right-hand side
of \eqref{eq:estimator} converges to a constant. With $n = 1500$ we estimate
$\alpha \approx 1.605665$, and with $n = 1000$ we estimate $\alpha \approx
1.606874$. It could be that $\alpha = 1.6$, which would give $b_n / s_n =
\Theta(n^{-1/10})$. Of course, the conjecture could also be false. It would be
difficult to empirically distinguish $b_n / s_n = \Theta(n^{-1/10})$ from $b_n /
s_n = \Theta((\log n)^{-1})$ without much larger $n$.

\section{Conclusion and Open Questions}
\label{sec:conclusion}

Using diagonal generating functions, we have given an explicit polynomial-time
algorithm to compute $b_n$, the number of separable derangements of $[n]$. This
allows us to compute significantly more terms than were previously known. We
believe that diagonal generating functions may be useful in other contexts. In
particular, separable derangements are merely derangements in $\Av(2413, 3142)$;
the same ideas may apply to derangements which avoid other patterns. We intend
to investigate this in a later publication.

There are a number of open questions related to separable derangements. First,
while we have a polynomial-time algorithm to compute $b_n$, we have been unable
to conjecture an explicit formula, generating function, D-finite (P-recursive)
recurrence, or differential equation \cite{Kauers2023}. There may be a more
explicit representation of $b_n$, but we have not found one.

Second, we believe that separable derangements should, in the limit, be very
sparse among all separable permutations. More precisely, we make the following
conjecture.

\begin{conjecture}
    The ratio $b_n / s_n$ tends to $0$ as $n \to \infty$.
\end{conjecture}

\section{Acknowledgments}
We thank Vincent Vatter and Jay Pantone for bringing this problem to our
attention, and especially Jay for encouraging us to write up our results.

Artificial intelligence was used in the preparation of this manuscript. GPT-5.6
Sol was used to implement the recurrences in C, copyedit the text, and generate
some mathematical ideas. In particular, Lemma~\ref{lemma:bound} was suggested by
GPT-5.6 Sol.

\printbibliography

\end{document}